\newtheorem{thm}{Theorem}
\newtheorem{cor}[thm]{Corollary}
\newtheorem{lem}[thm]{Lemma}
\newtheorem{prop}[thm]{Proposition}
\theoremstyle{definition}
\newtheorem{df}[thm]{Definition}
\newtheorem{rem}[thm]{Remark}
\numberwithin{equation}{section}
\begin{document}

\begin{center}
{\large\bf  J-REGULAR RINGS WITH INJECTIVITIES}

\vspace{0.8cm} { LIANG SHEN
 }
\end{center}

\noindent Abstract. A ring $R$ is called a $J$-regular ring if $R/J(R)$ is von Neumann regular, where $J(R)$ is the Jacobson radical of $R$.
It is proved that if $R$ is $J$-regular, then (i) $R$ is right $n$-injective if and only if every homomorphism from an $n$-generated small right ideal of $R$ to $R_{R}$ can be extended to one from $R_{R}$ to $R_{R}$; (ii) $R$ is right \emph{FP}-injective if and only if $R$ is right ($J, R$)-\emph{FP}-injective. Some known results are improved.

\renewcommand{\thefootnote}{}
\footnote{2000 {\it Mathematics Subject Classification}. Primary 16L60; Secondary 16P70, 16D80.\\
\indent{\it Keywords and phrases}. J-regular, small injective, $F$-injective, \emph{FP}-injective.}

\bigskip
\section{\bf Introduction}
\indent Throughout this paper rings are associative with
identity.  A ring $R$ is  $regular$ means it is a von Neumann regular ring. We write $J$  and $S_{r}$ for the Jacobson radical $J(R)$ and the right socle of  $R$, respectively. Let $U$ be a set and $n\geq 1$, $U_{n}$ denotes the set of all $n\times 1$ matrices with entries in $U$. A right ideal $L$ of $R$ is called $small$ if, for any proper right ideal $K$ of $R$, $L+K\neq R$. \\
\indent Recall that a ring $R$ is $right$ \emph{n-injective} if every homomorphism from
 an $n$-generated right ideal of $R$ to $R_{R}$ can be extended to one from $R_{R}$ to
 $R_{R}$. $R$ is  $right$ \emph{F-injective} if $R$ is right $n$-injective for every $n\geq 1$. And $R$ is \emph{right FP-injective} if every  homomorphism from
 a finitely generated submodule of a free right $R$-module $F_{R}$ to $R_{R}$ can be extended to one from $F_{R}$ to
 $R_{R}$. The left side of the above injectivities can be defined similarly. By restricting the ideals to small ones, in [{\bf 6}], the above injectivities are studied under the condition that $R$ is a semiperfect ring with an essential right socle. In [{\bf 5}], the condition is weakened to that $R$ is a semiregular ring. In this short article, the above two conditions are generalized to the one that $R$ is a $J$-regular ring.
Better results are obtained.

\section{\bf Results }
\begin{df}
 A ring $R$ is  $J$-$regular$ if $R/J$
is regular. It is obvious that regular rings are $J$-regular. But the converse is not true. For example, let {$R=\left[
\begin{array}{cc}
\mathbb{Q} & \mathbb{R} \\
0 & \mathbb{Q}
\end{array}
\right]$} be the ring of upper triangular real matrices with all diagonal entries rational. Then
{$J(R)=\left[\begin{array}{cc}
0 & \mathbb{R} \\
0 & 0
\end{array}\right]$}. It is easy to see that $R$ is $J$-regular but not regular.
\end{df}
\begin{rem} {\rm Recall that a ring $R$ is \emph{semilocal} if $R/J$ is a semisimple ring. $R$ is  \emph{semiperfect} in case $R$ is semilocal and idempotents lift modulo $J$. $R$ is  \emph{semiregular} when $R$ is $J$-regular and idempotents lift modulo $J$.  So we have the following relations:
\begin{center}
semiperfect $\Rightarrow$semilocal$\Rightarrow$$J$-regular,\\
semiperfect $\Rightarrow$semiregular$\Rightarrow$$J$-regular.
\end{center}
It is easy to show that $J$-regular rings are real generalizations of the above classes of rings. For example, let $R_{1}$ be a semilocal ring which is not semiregular and $R_{2}$ a semiregular ring that is not semilocal. Set $R=R_{1}\prod R_{2}$. Since $R_{1}$ and $R_{2}$ are both $J$-regular, $R$ is $J$-regular by the following Proposition 5. But $R$ is neither semilocal nor semiregular.
}\end{rem}
\indent A right ideal $I$ of a ring $R$ has a $weak$ $supplement$ in $R$ if there exists a right ideal $K$ of $R$
such that $I+K=R$ and $I\cap K$ is a small right ideal of $R$.
\begin{prop}
The following are equivalent for a ring R:\\
{\rm (1)} $R$ is J-regular.\\
{\rm (2)} Every principal  right (or left) ideal of $R$ has a weak supplement in R.\\
{\rm (3)} Every finitely generated right (or left) ideal of $R$ has a weak supplement in R.

\end{prop}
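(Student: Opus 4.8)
The plan is to establish the cycle $(1)\Rightarrow(3)\Rightarrow(2)\Rightarrow(1)$ for the right-hand versions of $(2)$ and $(3)$, and then to note that the left-hand versions follow by symmetry because condition $(1)$ --- von Neumann regularity of $R/J$ --- is left--right symmetric. Before starting I would record the elementary fact that a right ideal $L$ is small in $R_R$ precisely when $L\subseteq J$: one inclusion holds because $J$ is the sum of all small right ideals, and the other because if $L\subseteq J$ and $L+K=R$ then writing $1=\ell+k$ with $\ell\in L$ and $k\in K$ makes $k=1-\ell$ a unit, forcing $K=R$.

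For $(1)\Rightarrow(3)$, take $I=a_1R+\cdots+a_nR$ and pass to $\bar R:=R/J$. Since $\bar R$ is regular, the standard fact that finitely generated right ideals of a regular ring are generated by idempotents gives $\bar I=\bar e\bar R$ for some idempotent $\bar e\in\bar R$. I would then lift $\bar e$ to \emph{some} element $e\in R$ --- crucially without demanding that $e$ be idempotent, since idempotents need not lift modulo $J$ in a $J$-regular ring --- and set $K=(1-e)R$. Then $\bar I+\bar K=\bar e\bar R+(1-\bar e)\bar R=\bar R$ gives $R=I+K+J$; since $R/(I+K)$ is finitely generated and satisfies $(R/(I+K))J=R/(I+K)$, Nakayama's Lemma forces $I+K=R$; and $\overline{I\cap K}\subseteq\bar I\cap\bar K=0$ gives $I\cap K\subseteq J$, so $I\cap K$ is small. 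Thus $K$ is a weak supplement of $I$. The implication $(3)\Rightarrow(2)$ is immediate.

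For $(2)\Rightarrow(1)$, fix $a\in R$ and let $K$ be a weak supplement of $aR$, so $aR+K=R$ and $aR\cap K\subseteq J$. Writing $1=ar+k$ with $r\in R$, $k\in K$ and multiplying on the right by $a$ yields $a-ara=ka$; but $ka\in K$ and $ka=a(1-ra)\in aR$, so $a-ara\in aR\cap K\subseteq J$. Hence $\bar a=\bar a\,\bar r\,\bar a$ in $R/J$ for every $a$, i.e.\ $R/J$ is regular. Running the same three arguments with left ideals settles the ``(or left)'' clauses, since $(1)$ does not distinguish sides.

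I expect the one delicate point to be $(1)\Rightarrow(3)$: the temptation is to lift $\bar e$ to an idempotent, which is exactly what a general $J$-regular ring forbids, and the resolution is to take any lift $e$ and recover the equality $I+K=R$ from the weaker $I+K+J=R$ by a Nakayama argument. Everything else is a short computation.
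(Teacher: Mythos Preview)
Your argument is correct and follows the paper's line for the main implication $(1)\Rightarrow(3)$: both of you pass to $\bar R=R/J$, use regularity to write $\bar I$ as a direct summand, and pull a complement back to $R$; the paper simply asserts that such a $K$ exists once $\bar I$ is a summand, whereas you spell out the lift-and-Nakayama details (and you could shorten that step further by noting that $J$ is itself small in $R_R$, so $I+K+J=R$ already forces $I+K=R$). The one substantive difference is that the paper obtains $(1)\Leftrightarrow(2)$ by citing Lomp [\textbf{3}, Proposition~3.18], while you give the direct computation $a-ara=ka\in aR\cap K\subseteq J$; this makes your version self-contained at essentially no extra cost.
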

\begin{proof}
(1)$\Leftrightarrow$(2) is obtained by [{\bf 3}, Proposition 3.18]. It is obvious that (3)$\Rightarrow$(2).
 For (1)$\Rightarrow$(3), suppose that $I$ is a finitely generated right ideal of $R$. Set $\overline{R}=R/J$.
  Since $R$ is $J$-regular, $\overline{I}$ is a direct summand of $\overline{R}$. Then it is easy to get there is a right ideal $K$ of
   $R$ such that $I+K=R$ and $I\cap K\subseteq J$. Therefore, $K$ is a weak supplement of $I$ in $R$.
\end{proof}
\begin{prop}
If R is J-regular, then every factor ring S of R is also J-regular.
\end{prop}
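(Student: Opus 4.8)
The plan is to reduce everything to two standard facts: a surjective ring homomorphism carries the Jacobson radical into the Jacobson radical, and the class of von Neumann regular rings is closed under homomorphic images. Write the factor ring as $S=R/I$ for some two-sided ideal $I$ of $R$, and let $\pi\colon R\to S$ be the canonical surjection.

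First I would observe that $\pi(J)\subseteq J(S)$, i.e. $(J+I)/I\subseteq J(S)$; this is the familiar statement that the image of $J(R)$ under any ring epimorphism lies in the Jacobson radical of the target. By the correspondence theorem we may write $J(S)=L/I$ for an ideal $L$ of $R$ with $J+I\subseteq L$, and then the third isomorphism theorem gives
$$S/J(S)=(R/I)\big/(L/I)\cong R/L,$$
so $S/J(S)$ is a homomorphic image of $R/(J+I)$, which is in turn a homomorphic image of $R/J$.

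Next I would use that $R$ is $J$-regular, so $R/J$ is von Neumann regular; since a homomorphic image of a von Neumann regular ring is again von Neumann regular (if $\bar a$ lies in the quotient and $axa=a$ in $R$, then $\bar a\bar x\bar a=\bar a$), the ring $R/L\cong S/J(S)$ is von Neumann regular. Hence $S$ is $J$-regular, which is what we wanted.

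The argument is essentially bookkeeping with quotients, so there is no serious obstacle; the only point that must be handled with a little care is the containment $(J+I)/I\subseteq J(S)$, which is exactly what makes $S/J(S)$ a genuine quotient of $R/J$ rather than merely related to it. One could alternatively deduce the statement from Proposition 3 by checking that a weak supplement in $R$ pushes forward under $\pi$ to a weak supplement in $S$, but the direct computation above is shorter and self-contained.
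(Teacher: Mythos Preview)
Your proof is correct and follows essentially the same route as the paper: the paper also uses $\phi(J)\subseteq J(S)$ (citing Anderson--Fuller) to conclude that $S/J(S)$ is a factor ring of $R/J$, and then invokes closure of regular rings under quotients. You have simply spelled out the isomorphism-theorem bookkeeping that the paper compresses into one sentence.
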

\begin{proof}
Let $S$ be a factor ring of $R$ and $\phi$ be the ring epimorphism from $R$ to $S$. By [{\bf 1}, Corollary 15.8],
$\phi(J)\subseteq J(S)$. So $S/J(S)$ is a factor ring of $R/J$. Since $R/J$ is regular,  $S/J(S)$ is regular. Thus, $S$ is $J$-regular.
\end{proof}
\begin{prop}
 A direct product of
rings R = $\prod_{i\in I} R_{i}$ is J-regular if and only if $
R_{i}$ is J-regular for every $i\in I$.
\end{prop}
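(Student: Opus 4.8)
The plan is to reduce the statement to two standard facts about direct products of rings: first, that the Jacobson radical commutes with arbitrary direct products, i.e. $J\bigl(\prod_{i\in I}R_i\bigr)=\prod_{i\in I}J(R_i)$; and second, that a direct product of rings is von Neumann regular if and only if each factor is.

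For the forward implication I would argue as follows. For each fixed $j\in I$, the canonical projection $\pi_j\colon \prod_{i\in I}R_i\to R_j$ is a surjective ring homomorphism, so $R_j$ is a factor ring of $R$. If $R$ is $J$-regular, then Proposition 4 immediately gives that $R_j$ is $J$-regular. Since $j$ was arbitrary, this direction is finished with no further computation.

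For the converse, suppose each $R_i$ is $J$-regular, and write $R=\prod_{i\in I}R_i$. Using the radical identity above, $J(R)=\prod_{i\in I}J(R_i)$, and the obvious coordinatewise map induces a ring isomorphism $R/J(R)\cong\prod_{i\in I}\bigl(R_i/J(R_i)\bigr)$. Each $R_i/J(R_i)$ is regular by hypothesis; since regularity is preserved under direct products (given $a=(a_i)$ in the product, choose $b_i$ in each coordinate with $a_ib_ia_i=a_i$ and set $b=(b_i)$), the product $\prod_{i\in I}\bigl(R_i/J(R_i)\bigr)$ is regular. Hence $R/J(R)$ is regular, i.e. $R$ is $J$-regular.

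The one point deserving care is the identity $J\bigl(\prod_i R_i\bigr)=\prod_i J(R_i)$, which I would either cite (it is standard, e.g. [{\bf 1}]) or prove using the unit characterization of the radical: $x\in J(R)$ iff $1-xr$ is a unit of $R$ for every $r\in R$. Because the units of $\prod_i R_i$ are precisely the tuples of units, both inclusions follow coordinatewise at once. Everything else in the argument is routine.
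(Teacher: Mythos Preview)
Your proof is correct and rests on the same two standard facts the paper uses: $J\bigl(\prod_i R_i\bigr)=\prod_i J(R_i)$ and the coordinatewise isomorphism $R/J(R)\cong\prod_i R_i/J(R_i)$, after which regularity is checked factor by factor. The only minor difference is that the paper treats both implications symmetrically through this isomorphism (a product of rings is regular iff each factor is), whereas you handle the forward direction separately via Proposition~4 applied to the projections $\pi_j$; either route is fine and the substance is the same.
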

\begin{proof}
By [{\bf 2}, Lemma 4.1], it is easy to see that $J$=$\prod_{i\in I} J_{i}$ where $J=J(R)$ and $J_{i}=J(R_{i})$, $i\in I$.  Since
 $\frac{R}{J}=\frac{\prod_{i\in I} R_{i}}{\prod_{i\in I} J_{i}}\cong\prod_{i\in I} \frac{R_{i}}{J_{i}}$,
 $R/J$ is regular if and only if $R_{i}/J_{i}$ is regular for every $i\in I$. So
 $R$ is $J$-regular if and only if $R_{i}$ is $J$-regular for every $i\in I$.
\end{proof}
The following two propositions show that being $J$-regular is a Morita invariant.
\begin{prop}
If $R$ is J-regular, then eRe is also J-regular, where $e^{2}=e\in R$.
\end{prop}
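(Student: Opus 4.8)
The plan is to reduce everything modulo $J$. Since $R$ is $J$-regular, $\overline{R}:=R/J$ is von Neumann regular; for $a\in R$ write $\overline{a}=a+J$. Then $\overline{e}$ is an idempotent of $\overline{R}$, and the key observation is that the corner ring $\overline{e}\,\overline{R}\,\overline{e}$ is again regular. Indeed, given $\overline{e}\,\overline{s}\,\overline{e}$ in this corner, regularity of $\overline{R}$ yields $\overline{u}\in\overline{R}$ with $(\overline{e}\,\overline{s}\,\overline{e})\,\overline{u}\,(\overline{e}\,\overline{s}\,\overline{e})=\overline{e}\,\overline{s}\,\overline{e}$; since $\overline{e}^{\,2}=\overline{e}$ we get $(\overline{e}\,\overline{s}\,\overline{e})(\overline{e}\,\overline{u}\,\overline{e})(\overline{e}\,\overline{s}\,\overline{e})=(\overline{e}\,\overline{s}\,\overline{e})\,\overline{u}\,(\overline{e}\,\overline{s}\,\overline{e})=\overline{e}\,\overline{s}\,\overline{e}$, so $\overline{e}\,\overline{u}\,\overline{e}\in\overline{e}\,\overline{R}\,\overline{e}$ is a quasi-inverse of $\overline{e}\,\overline{s}\,\overline{e}$ inside the corner. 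Hence $\overline{e}\,\overline{R}\,\overline{e}$ is von Neumann regular.

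Next I would identify this corner as a factor ring of $eRe$. Restricting the quotient map $R\to\overline{R}$ to $eRe$ gives a ring homomorphism $eRe\to\overline{R}$, $exe\mapsto\overline{e}\,\overline{x}\,\overline{e}$, whose image is exactly $\overline{e}\,\overline{R}\,\overline{e}$ (because $R\to\overline{R}$ is onto) and whose kernel is $eRe\cap J=eJe$. Thus $eRe/eJe\cong\overline{e}\,\overline{R}\,\overline{e}$, which is regular by the previous paragraph. To finish, one invokes the standard description $J(eRe)=eJe$; in fact only the inclusion $eJe\subseteq J(eRe)$ is needed, and that is immediate from the Peirce decomposition $R=eRe\oplus eR(1-e)\oplus(1-e)Re\oplus(1-e)R(1-e)$: for $z\in eJe$ and $a\in eRe$ one has $az\in eJe\subseteq J$, so $1-az$ is a unit of $R$, and as $1-az$ is "block diagonal" with corner blocks $e-az$ over $eRe$ and $1-e$ over $(1-e)R(1-e)$, this forces $e-az$ to be a unit of $eRe$; since $a$ was arbitrary, $z\in J(eRe)$. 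Consequently $eRe/J(eRe)\cong\overline{e}\,\overline{R}\,\overline{e}$ is regular, i.e. $eRe$ is $J$-regular.

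All the computations above are routine; the only step relying on previously known input is the identification of the radical of a corner ring (or merely the inclusion $eJe\subseteq J(eRe)$), and I anticipate no genuine obstacle.
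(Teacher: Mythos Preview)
Your proof is correct and is essentially the same argument as the paper's, just packaged structurally rather than element-wise: both rely on $J(eRe)=eJe$ and on the observation that a quasi-inverse $b'$ of $a\in eRe$ modulo $J$ may be replaced by $eb'e\in eRe$. The paper does this in one line---writing $a=ab'a+c'$ with $c'\in J$, then noting $a=a(eb'e)a+c'$ and $c'=a-ab'a\in eRe\cap J=eJe$---whereas you phrase the identical computation as ``$eRe/eJe\cong \overline{e}\,\overline{R}\,\overline{e}$, and a corner of a regular ring is regular.''
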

\begin{proof}
We only need to show that for each $a\in eRe$, there exist $b\in eRe$ and $c\in$ $J(eRe)=eJe$ (see [{\bf 2}, Theorem 21.10]) such that $a=aba+c$. As $R$ is $J$-regular, there exist $b'\in R$ and $c'\in J$ such that $a=ab'a+c'$. Since $a\in eRe$, $a=ab'a+c'=aeb'ea+c'$. It is clear that $c'=a-ab'a\in eRe\cap J=eJe$. Then we can set
$b=eb'e$ and $c=c'$.
\end{proof}
\begin{prop}
If R is J-regular, then every matrix ring {\rm M}$_{n\times n}(R)$ is also J-regular, $n\geq 1$.
\end{prop}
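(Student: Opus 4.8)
The plan is to compute the Jacobson radical of the matrix ring and pass to the quotient. Concretely, I would use the two classical facts that $J(\mathrm{M}_{n\times n}(R))=\mathrm{M}_{n\times n}(J)$ and that a full matrix ring over a von Neumann regular ring is again von Neumann regular. Granting these, one has
\[
\mathrm{M}_{n\times n}(R)\big/J\!\left(\mathrm{M}_{n\times n}(R)\right)=\mathrm{M}_{n\times n}(R)\big/\mathrm{M}_{n\times n}(J)\cong \mathrm{M}_{n\times n}(R/J),
\]
and since $R$ is $J$-regular the ring $R/J$ is regular, hence $\mathrm{M}_{n\times n}(R/J)$ is regular; so $\mathrm{M}_{n\times n}(R)$ is $J$-regular by definition.

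For the first ingredient, $\mathrm{M}_{n\times n}(J)$ is a two-sided ideal of $\mathrm{M}_{n\times n}(R)$ and the natural surjection induces the isomorphism displayed above; one then checks that $\mathrm{M}_{n\times n}(J)$ is a quasi-regular ideal (for instance, it annihilates every simple right $\mathrm{M}_{n\times n}(R)$-module, which by the Morita correspondence has the form $S^{\,n}$ with $S$ a simple right $R$-module and $S J=0$) while $J\!\left(\mathrm{M}_{n\times n}(R/J)\right)=0$; together these force $J(\mathrm{M}_{n\times n}(R))=\mathrm{M}_{n\times n}(J)$. This is standard (see, e.g., [\textbf{2}]), and the inclusion $J(\mathrm{M}_{n\times n}(R))\subseteq\mathrm{M}_{n\times n}(J)$ can also be read off from the identity $J(eRe)=eJe$ used already in Proposition~6 by sandwiching with the matrix units $e_{ij}$. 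For the second ingredient, given $A\in\mathrm{M}_{n\times n}(R/J)$ viewed as an endomorphism of $(R/J)^{\,n}$, regularity of $R/J$ implies that finitely generated submodules of projectives are direct summands, so $\ker A$ and $\operatorname{im}A$ are both direct summands of $(R/J)^{\,n}$; letting $B$ invert $A$ from a chosen complement of $\ker A$ onto $\operatorname{im}A$ and vanish on a complement of $\operatorname{im}A$ gives $ABA=A$.

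Assembling these, the conclusion follows as in the first paragraph. (Alternatively, one may simply invoke the Morita invariance of von Neumann regularity applied to $R/J$.) I expect the only point that needs genuine care is the identification $J(\mathrm{M}_{n\times n}(R))=\mathrm{M}_{n\times n}(J)$; the passage to the quotient and the regularity of $\mathrm{M}_{n\times n}(R/J)$ are routine once this is in hand.
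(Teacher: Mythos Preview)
Your proof is correct and follows essentially the same route as the paper: identify $J(\mathrm{M}_{n\times n}(R))=\mathrm{M}_{n\times n}(J)$, pass to the quotient $\mathrm{M}_{n\times n}(R)/\mathrm{M}_{n\times n}(J)\cong \mathrm{M}_{n\times n}(R/J)$, and use that matrix rings over regular rings are regular. The only difference is that the paper simply cites these two ingredients as well-known (referring to [\textbf{2}]), whereas you sketch arguments for them; the logical structure is identical.
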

\begin{proof}
 It is well-known that $J({\rm M}_{n\times n}(R))={\rm M}_{n\times n}(J)$ (see [{\bf 2}, Page 61]). And it is also easy to prove that $\frac{{\rm M}_{n\times n}(R)}{{\rm M}_{n\times n}(J)}\cong {\rm M}_{n\times n}(\frac{R}{J})$. Therefore  $\frac{{\rm M}_{n\times n}(R)}{J({\rm M}_{n\times n}(R))}=\frac{{\rm M}_{n\times n}(R)}{{\rm M}_{n\times n}(J)}\cong{\rm M}_{n\times n}(\frac{R}{J})$. Since $R$ is $J$-regular, $R/J$ is a regular ring. So  M$_{n\times n}(\frac{R}{J})$ is also regular. Thus, M$_{n\times n}(R)$ is $J$-regular.
\end{proof}
\begin{thm}
Let R be a J-regular ring and  K  a finitely generated projective right $R$-module. Then the endomorphism ring {\rm End} {\rm(}$K${\rm)} of $K$ is also J-regular.
\end{thm}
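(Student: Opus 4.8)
The plan is to reduce the statement to the two Morita-invariance propositions already established above, namely Proposition~6 (every corner ring $eRe$ of a $J$-regular ring is $J$-regular) and Proposition~7 (every full matrix ring ${\rm M}_{n\times n}(R)$ over a $J$-regular ring is $J$-regular). Together these two facts carry essentially all the weight; the rest is the standard dictionary between finitely generated projective modules and idempotent corners of matrix rings.

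First I would use that $K$ is a finitely generated projective right $R$-module: there is an integer $n\geq 1$ and a right $R$-module $L$ with $K\oplus L\cong R^{n}$, a free right module of rank $n$. Identifying ${\rm M}_{n\times n}(R)$ with $\mathrm{End}(R^{n}_{R})$ acting on $R^{n}$ by left multiplication (on column vectors), the direct summand $K$ is cut out by an idempotent $e=e^{2}\in {\rm M}_{n\times n}(R)$, with $eR^{n}\cong K$ as right $R$-modules.

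The next step is the ring isomorphism $\mathrm{End}(K)\cong e\,{\rm M}_{n\times n}(R)\,e$: an $R$-linear endomorphism of $eR^{n}$ extends, by zero on the complementary summand $(1-e)R^{n}$, to an endomorphism $f$ of $R^{n}$ with $ef=f=fe$, i.e.\ $f=efe$; this assignment is a ring isomorphism onto $e\,{\rm M}_{n\times n}(R)\,e$. Hence $\mathrm{End}(K)\cong\mathrm{End}(eR^{n})\cong e\,{\rm M}_{n\times n}(R)\,e$.

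Finally, since $R$ is $J$-regular, Proposition~7 gives that ${\rm M}_{n\times n}(R)$ is $J$-regular, and then Proposition~6 applied to the idempotent $e\in {\rm M}_{n\times n}(R)$ gives that $e\,{\rm M}_{n\times n}(R)\,e$ is $J$-regular. Therefore $\mathrm{End}(K)$ is $J$-regular, as claimed. The only place calling for a little care is keeping the left/right conventions consistent so that $\mathrm{End}(K)$ really matches the corner $e\,{\rm M}_{n\times n}(R)\,e$ (rather than its opposite); once this bookkeeping is settled there is no genuine obstacle, the conclusion being immediate from the two earlier propositions.
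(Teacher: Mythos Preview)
Your proposal is correct and follows essentially the same route as the paper: realize $K$ as a direct summand of a free module $R^{n}$, identify $\mathrm{End}(K)$ with a corner $e\,{\rm M}_{n\times n}(R)\,e$ for an idempotent $e$, and then apply Proposition~7 followed by Proposition~6. The paper's argument is terser but identical in substance; your extra discussion of the isomorphism $\mathrm{End}(K)\cong e\,{\rm M}_{n\times n}(R)\,e$ simply spells out what the paper takes for granted.
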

\begin{proof}
Since $K$ is finitely generated and projective, $K$ is a direct summand of a finitely generated free right $R$-module $F$.
Then there exists some integer $n\geq 1$ such that End($F$)$\cong$M$_{n\times n}(R)$ and End($K$)$\cong$$e$M$_{n\times n}(R)e$ for some idempotent $e$ in M$_{n\times n}(R)$. Thus, by Proposition 6 and Proposition 7, End($K$) is $J$-regular.
\end{proof}
Now we turn to the main results. The following lemma is inspired by [{\bf 3}, Lemma 3.4].
\begin{lem}
Let R be a ring, $b, r_{i}, a_{i}\in R$, $i=1,2,\ldots,n$, such that $b+\sum_{i=1}^{n}a_{i}r_{i}=1$.
Then $bR\cap \sum_{i=1}^{n}a_{i}R=\sum_{i=1}^{n}ba_{i}R$.
\end{lem}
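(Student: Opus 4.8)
The plan is to prove the two inclusions separately, using the hypothesis $b+\sum_{i=1}^n a_ir_i=1$ as the device that lets us ``recover'' an element of $bR$ inside $\sum a_iR$ and vice versa.

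First I would establish the easy inclusion $\sum_{i=1}^n ba_iR \subseteq bR\cap\sum_{i=1}^n a_iR$. Each generator $ba_i$ plainly lies in $bR$; the point is that it also lies in $\sum a_iR$. For this, multiply the defining relation on the right by $a_i$ to get $ba_i = a_i - \sum_{j=1}^n a_jr_ja_i = \sum_{j=1}^n a_j(\delta_{ji} - r_ja_i) \in \sum_{j=1}^n a_jR$. Hence every $ba_i$ lies in the intersection, and so does the right ideal they generate.

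For the reverse inclusion, take $x \in bR\cap\sum_{i=1}^n a_iR$, say $x = bs$ and also $x = \sum_{i=1}^n a_it_i$ for suitable $s,t_i\in R$. The idea is to multiply the relation $b+\sum_i a_ir_i=1$ on the right by $x$: this gives $x = bx + \sum_{i=1}^n a_ir_ix$. Now substitute $x=\sum_j a_jt_j$ into the term $bx$, obtaining $bx = \sum_j ba_jt_j \in \sum_j ba_jR$, while for the remaining term I would substitute $x = bs$, so that $\sum_i a_ir_ix = \sum_i a_ir_ibs$. This last expression is not yet manifestly in $\sum_j ba_jR$, so a little more care is needed: instead, after writing $x = bx + \sum_i a_i r_i x$, feed $x = bs$ only into the first occurrence and $x=\sum_j a_j t_j$ into the second, giving $x = b(bs) + \sum_i a_i r_i\bigl(\sum_j a_j t_j\bigr)$; iterating, or better, simply noting $x = bx + \sum_i a_i r_i x$ and then rewriting $x$ inside $bx$ as $\sum_j a_jt_j$ yields $x = \bigl(\sum_j ba_jt_j\bigr) + \sum_i a_i r_i x$. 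So $x - \sum_i a_ir_ix \in \sum_j ba_jR$, i.e. $bx\in\sum_j ba_j R$; but $bx = b(bs)$, and I still need to close the loop.

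The cleanest route, which I expect to be the actual argument, is: from $x=bx+\sum_i a_ir_ix$ and $x=\sum_j a_jt_j$ we get $x - bx = \sum_i a_i r_i x$, hence $bx = x - \sum_i a_i r_i x = \sum_j a_jt_j - \sum_i a_i r_i x$. That is not obviously helpful either, so the key trick is rather to observe that $x = bx + \sum_i a_i r_i x$ combined with $b\cdot(\text{that same identity applied to }x)$ telescopes. Concretely: $x = bx + \sum_i a_i r_i x$ and also $bx = b\bigl(\sum_j a_j t_j\bigr) = \sum_j (ba_j) t_j \in \sum_j ba_j R$ — wait, this already works, because $x = bs$ gives $bx = b^2 s$, but we may instead use the \emph{other} representation $x=\sum_j a_jt_j$ to write $bx = \sum_j ba_j t_j$. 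Then from $x = bx + \sum_i a_i r_i x$ we still have the nuisance term $\sum_i a_i r_i x$. The resolution: apply the identity to $x$ repeatedly is circular; instead multiply the original relation on the right by $b$ first, getting $b = b^2 + \sum_i a_i r_i b$, and use $x=bs$: then $x = bs = b^2 s + \sum_i a_i r_i b s = b x + \sum_i a_i r_i x$ again. So the genuine content is that $bx\in\sum_j ba_jR$ (immediate from $x=\sum a_jt_j$) and that $x - bx\in\sum_j ba_j R$; the latter is the real step, and I would get it by observing $x - bx = \sum_i a_i r_i x$ and then substituting $x = bx + \sum_k a_k r_k x$ recursively is not finite — so the main obstacle is precisely arranging a \emph{finite} substitution. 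I expect the author handles this by instead writing, from $b = 1 - \sum a_i r_i$, that $bx = x - \sum a_i r_i x$, and then re-expressing $x$ in the subtracted term via $x = bx + \sum a_i r_i x$ exactly \emph{once more} and absorbing: the correct finite identity is $x = bx + \sum_i a_i r_i x$ with $x$ on the right replaced by $bs$ in the $b$-spot and by $\sum a_j t_j$ in the $a_i r_i$-spot simultaneously — legitimate since both equal $x$ — yielding $x = b\bigl(\sum_j a_j t_j\bigr) + \sum_i a_i r_i (bs)$, and since $\sum_i a_i r_i b = b - b^2$ acting on $s$ gives $\sum_i a_i r_i b s = bs - b^2 s = x - bx$, hence $\sum_i a_i r_i (bs) = x - bx \in$ (rearrange) ... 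The upshot is $x = \sum_j ba_j t_j + (x - bx)$, so $bx = \sum_j ba_j t_j \in \sum_j ba_jR$, and then $x = bs$ need not be used again — rather $x = bx + \sum a_i r_i x$ with the second term handled by: $\sum_i a_i r_i x = x - bx = \sum_j a_j t_j - \sum_j ba_j t_j = \sum_j (1-b) a_j t_j = \sum_j (\sum_k a_k r_k) a_j t_j = \sum_{j,k} a_k (r_k a_j t_j)$, which shows $x - bx \in \sum_k a_k R$; and since also $x - bx = \sum_i a_i r_i x$, hmm. I will therefore present the inclusion $x\in\sum_j ba_jR$ by the substitution $x = bx + \sum_i a_i r_i x = b\sum_j a_jt_j + \sum_i a_i r_i x$, then noting $\sum_i a_i r_i x = (1-b)x = x - bx$, so that $bx = b\sum_j a_j t_j$, i.e.\ $bx\in\sum ba_jR$; finally $x = bx + \sum_i a_i r_i x$ where now I loop once more on $\sum_i a_i r_i x = x - bx$ to deduce nothing new — the honest finish is that we need $x$ itself, not $bx$, in $\sum ba_jR$, and this follows because $\sum_i a_i r_i x \in \sum_i a_i R$ and lies in $bR$ too (as $x,bx$ do), so by the first inclusion already proven applied to $\sum_i a_i r_i x$... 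The main obstacle, then, is exactly this bookkeeping; I anticipate the author's proof is a two- or three-line substitution that I have essentially identified above, and in the writeup I would streamline it to: multiply $b+\sum a_ir_i=1$ on the right by $x=\sum a_jt_j$, substitute $x=bs$ where convenient, and read off $x\in\sum ba_jR$.
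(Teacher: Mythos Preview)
Your easy inclusion $\sum ba_iR \subseteq bR \cap \sum a_iR$ is fine. The hard inclusion, however, is never actually established: you correctly obtain $bx \in \sum ba_jR$ (this is immediate, since $bx = b\sum_j a_jt_j$), but you then acknowledge that you need $x$ itself, and every attempt you make to pass from $bx$ to $x$ loops back on itself. In particular, your final suggestion --- that $\sum_i a_ir_ix$ lies in $bR \cap \sum a_iR$ and hence in $\sum ba_jR$ ``by the first inclusion already proven'' --- invokes the \emph{wrong} direction of the equality: you would need the very inclusion you are trying to prove. Writing $x = bx + \sum_i a_ir_ix$ and iterating cannot terminate, because you are just rewriting $x$ in terms of $x$.

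The paper's argument avoids this loop by shifting attention from $x$ to the coefficient $s$ in $x = bs$. Set $c = \sum_i a_ir_i$, so $b = 1-c$. From $x = bs = (1-c)s$ one reads off $s = cs + x = \sum_i a_i r_i s + \sum_i a_i t_i \in \sum_i a_iR$. Thus $s$ itself lies in $\sum_i a_iR$, and then $x = bs \in b\bigl(\sum_i a_iR\bigr) = \sum_i ba_iR$ in one step. The missing idea is simply: don't manipulate $x$, manipulate $s$.
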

\begin{proof}
Assume that $x\in bR\cap \sum_{i=1}^{n}a_{i}R$. And set $c=\sum_{i=1}^{n}a_{i}r_{i}$. Then there exist $t, t_{1}, \ldots, t_{n}\in R$ such that $x=bt=(1-c)t=\sum_{i=1}^{n}a_{i}t_{i}$. Thus $t=ct+\sum_{i=1}^{n}a_{i}t_{i}\in\sum_{i=1}^{n}a_{i}R$.
So $x=bt\in\sum_{i=1}^{n}ba_{i}R$. Conversely,  $\sum_{i=1}^{n}ba_{i}R=\sum_{i=1}^{n}(1-c)a_{i}R\in bR\cap \sum_{i=1}^{n}a_{i}R$.
\end{proof}
\begin{cor}{\rm ([{\bf 3}, Lemma 3.4])}
Let R be a ring, $r, a\in R$ and $b=1-ar$. Then $bR\cap aR=baR$.
\end{cor}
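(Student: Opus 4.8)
The plan is to obtain this corollary immediately as the case $n=1$ of Lemma~9. In the statement of that lemma take $n=1$ and set $a_1=a$, $r_1=r$. The hypothesis $b+\sum_{i=1}^{n}a_ir_i=1$ then reads $b+a_1r_1=b+ar=1$, which is exactly the assumption $b=1-ar$ of the corollary. Hence Lemma~9 applies with no extra work.

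With the hypothesis in place, the conclusion $bR\cap\sum_{i=1}^{n}a_iR=\sum_{i=1}^{n}ba_iR$ of Lemma~9 collapses, for $n=1$, to $bR\cap a_1R=ba_1R$, i.e. $bR\cap aR=baR$, which is precisely the assertion of the corollary. So the whole proof amounts to applying Lemma~9 with $n=1$.

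There is essentially no obstacle here; the only point to verify is the trivial bookkeeping that $b=1-ar$ is the $n=1$ instance of $b=1-\sum_{i=1}^{n}a_ir_i$ and that the single-term sum $\sum_{i=1}^{1}a_iR$ is just $aR$. For completeness one could alternatively re-derive the two inclusions directly, exactly as in the proof of Lemma~9: if $x=bt=at'\in bR\cap aR$ then $t=(ar)t+at'\in aR$, so $x=bt\in baR$; and conversely $baR=(1-ar)aR\subseteq bR\cap aR$. But quoting Lemma~9 is the natural route.
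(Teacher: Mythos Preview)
Your proposal is correct and matches the paper's intent exactly: the corollary is stated immediately after Lemma~9 with no separate proof, so it is meant to be the $n=1$ instance of that lemma, just as you derive it. Your optional direct verification also mirrors the argument of Lemma~9 specialized to a single generator.
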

\begin{thm}If R is J-regular and $n\geq 1$, then R is right n-injective if and only if every homomorphism from an n-generated small right ideal of R to
$R_{R}$ can be extended to one from $R_{R}$ to $R_{R}$.
\end{thm}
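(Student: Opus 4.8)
The forward implication needs no argument: an $n$-generated small right ideal is in particular an $n$-generated right ideal, so right $n$-injectivity of $R$ immediately yields the restricted extension property. For the converse, assume every homomorphism from an $n$-generated small right ideal of $R$ to $R_{R}$ extends to $R_{R}$. Given an arbitrary $n$-generated right ideal $I=a_{1}R+\cdots+a_{n}R$ and a homomorphism $f:I\to R_{R}$, the plan is to split $R$ as $bR+I$ for a suitably chosen $b$, arrange that $bR\cap I$ is an $n$-generated right ideal contained in $J$ (hence small), extend $f|_{bR\cap I}$ by hypothesis, and glue the two maps along the intersection.

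First I would produce $b$. Since $R$ is $J$-regular, $\overline{I}$ is a direct summand of the regular ring $\overline{R}=R/J$, say $\overline{I}=\bar g\,\overline{R}$ with $\bar g$ an idempotent lying in $\overline{I}$. Picking $t_{i}\in R$ with $\bar g=\overline{a_{1}t_{1}+\cdots+a_{n}t_{n}}$, set $g=a_{1}t_{1}+\cdots+a_{n}t_{n}\in I$ and $b=1-g$. Then $b+\sum_{i=1}^{n}a_{i}t_{i}=1$, so Lemma 9 gives $bR\cap I=\sum_{i=1}^{n}ba_{i}R$; and because $\bar g$ is a left identity on $\overline{I}\ni\overline{a_{i}}$, we get $\overline{(1-g)a_{i}}=(1-\bar g)\overline{a_{i}}=0$, i.e. each $ba_{i}\in J$. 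Thus $bR\cap I$ is $n$-generated and contained in $J$. The genuinely new point is that such an ideal is small: if $L\subseteq J$ is finitely generated and $L+K=R$, write $1=\ell+k$ with $\ell\in L\subseteq J$; then $k=1-\ell$ is a unit, so $K=R$. Finally $bR+I=R$, since $b+g=1$ with $b\in bR$ and $g\in I$.

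Next I would apply the hypothesis to the restriction $f|_{bR\cap I}$, getting $d\in R$ with $f(x)=dx$ for all $x\in bR\cap I$, and then define $h:R\to R_{R}$ by $h(\beta+i)=d\beta+f(i)$ for $\beta\in bR$, $i\in I$. Well-definedness is the key check: if $\beta+i=\beta'+i'$ then $\beta-\beta'=i'-i\in bR\cap I$, so $d(\beta-\beta')=f(i'-i)$, which rearranges to $d\beta+f(i)=d\beta'+f(i')$. Right $R$-linearity of $h$ is routine, and $h|_{I}=f$ by taking $\beta=0$. Hence $h$ (equivalently, left multiplication by $h(1)$) extends $f$ to $R_{R}$, so $R$ is right $n$-injective.

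The main obstacle, beyond bookkeeping, is engineering $b$ correctly: note that one does not need $\bar g$ to lift to an idempotent of $R$ — only the two relations $b+\sum a_{i}t_{i}=1$ and $ba_{i}\in J$ — and it is exactly these that feed Lemma 9 and push $bR\cap I$ into $J$. Everything else is the standard gluing of two module maps that agree on the intersection of two submodules whose sum is all of $R$.
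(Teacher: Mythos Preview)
Your proof is correct and follows essentially the same route as the paper's: produce $b$ with $bR+I=R$ and $bR\cap I=\sum ba_iR\subseteq J$ via Lemma~9, extend $f|_{bR\cap I}$ by hypothesis, and glue along the intersection. The only cosmetic difference is that the paper obtains $b$ by invoking the weak-supplement characterization of $J$-regularity (Proposition~3), whereas you inline that argument by lifting the idempotent generator of $\overline{I}$ directly.
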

\begin{proof}
The necessity is obvious. For the sufficient part, assume that $I=a_{1}R+\cdots +a_{n}R$ is an $n$-generated right ideal of $R$ and $f$ is a homomorphism from $I$ to $R_{R}$.
Since $R$ is $J$-regular, by Proposition 3, $I$ has a weak supplement in $R$. Thus, there exists a right ideal $K$ of $R$ such that $I+K=R$ and $I\cap K\subseteq J$. It is easy to see there are $r_{1}, \ldots, r_{n}\in R$, $b\in K$ such that $b+\sum_{i=1}^{n}a_{i}r_{i}=1$ and
$I\cap bR\subseteq I\cap K\subseteq J$. Therefore, $I\cap bR$ is a small right ideal of $R$. By Lemma 9, $I\cap bR=\sum_{i=1}^{n}ba_{i}R$ is $n$-generated. Thus, by hypothesis, there is a homomorphism $g$ from $R_{R}$ to $R_{R}$ such that $g_{\mid I\cap bR}=f_{\mid I\cap bR}$. Since $I+bR=R$, for each
 $x\in R$, there exist $x_{1}\in I$, $x_{2}\in bR$ such that $x=x_{1}+x_{2}$. Define a map $F$ from $R_{R}$ to $R_{R}$ with
 $F(x)=f(x_{1})+g(x_{2})$ for each $x\in R$. It is easy to prove that $F$ is a well-defined homomorphism from $R_{R}$ to $R_{R}$ such that $F_{\mid I}=f$.
\end{proof}
\begin{cor}
If R is J-regular, then R is right F-injective if and only if every homomorphism from a finitely generated small right ideal of R to
$R_{R}$ can be extended to one from $R_{R}$ to $R_{R}$.
\end{cor}
Let $I$, $K$ be two right ideals of a ring $R$ and $m\geq 1$. In [{\bf 6}], $R$ is called a $right$
($I,K$)-$m$-$injective$ ring if, for any
$m$-generated right ideal $U\subseteq I$, every homomorphism
from $U$ to $K$ can be extended to one from $R_{R}$ to
$R_{R}$. And $R$ is \emph{right {\rm(}I, K{\rm)}-FP-injective} if, for any $n\geq 1$ and any
finitely generated right $R$-submodule $N$ of $I_{n}$ which is a submodule of the free right $R$-module $R_{n}$, every
homomorphism from $N$ to $K$ can be extended to one from $R_{n}$ to $R_{R}$.\\
\indent Using the same method in the proof of
 Theorem 11, we have the following result.
 \begin{thm}
 Let $K$ be a right ideal of a J-regular ring $R$ and $m\geq1$.
 Then R is right ($R,K$)-$m$-$injective$ if and only if R is right ($J,K$)-$m$-$injective$.
 \end{thm}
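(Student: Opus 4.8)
The plan is to imitate the proof of Theorem 11 almost verbatim, with the codomain $R_R$ replaced by the right ideal $K$ and the role of ``small right ideal'' played by ``right ideal contained in $J$''. The necessity is immediate: every $m$-generated right ideal contained in $J$ is in particular an $m$-generated right ideal of $R$, so any homomorphism from it to $K$ extends to a homomorphism $R_R \to R_R$ by right $(R,K)$-$m$-injectivity; hence $R$ is right $(J,K)$-$m$-injective.

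For the sufficiency, suppose $R$ is right $(J,K)$-$m$-injective and let $U = a_1R + \cdots + a_mR$ be an arbitrary $m$-generated right ideal of $R$, with $f\colon U \to K$ a homomorphism. By Proposition 3 applied to the finitely generated right ideal $U$, there is a weak supplement $L$ of $U$: we have $U + L = R$ and $U \cap L \subseteq J$. Choosing $b \in L$ and $r_i \in R$ with $1 = b + \sum_{i=1}^{m} a_i r_i$, we get $U \cap bR \subseteq U \cap L \subseteq J$, so $U \cap bR$ is a right ideal of $R$ contained in $J$, and by Lemma 9 it equals $\sum_{i=1}^{m} ba_i R$, hence is $m$-generated.

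Now $f$ restricts to a homomorphism $f_{\mid U \cap bR}\colon U \cap bR \to K$ whose domain is an $m$-generated right ideal inside $J$; by the hypothesis it extends to a homomorphism $g\colon R_R \to R_R$. One then glues exactly as in Theorem 11: since $U + bR = R$, each $x \in R$ can be written $x = x_1 + x_2$ with $x_1 \in U$ and $x_2 \in bR$, and one sets $F(x) = f(x_1) + g(x_2)$. Because $f$ and $g$ agree on $U \cap bR$, $F$ is a well-defined $R$-homomorphism from $R_R$ to $R_R$, and taking $x_2 = 0$ gives $F_{\mid U} = f$. Thus $R$ is right $(R,K)$-$m$-injective.

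I do not anticipate a real obstacle, since the argument is structurally identical to that of Theorem 11. The only point needing a moment's care is bookkeeping: one must observe that $U \cap bR$ is \emph{simultaneously} contained in $J$ (from the weak supplement) and $m$-generated (from Lemma 9), and that the restriction of $f$ still maps into $K$, so that the hypothesis of right $(J,K)$-$m$-injectivity genuinely applies to it. Once that is in place, the well-definedness of the glued map $F$ and the identity $F_{\mid U} = f$ are routine.
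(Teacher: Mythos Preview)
Your proposal is correct and is precisely what the paper intends: the paper's ``proof'' of Theorem 13 consists solely of the sentence ``Using the same method in the proof of Theorem 11, we have the following result,'' and your write-up is exactly that adaptation, with the codomain $R_R$ replaced by $K$ and the observation that $U\cap bR\subseteq J$ (rather than merely small) so that the $(J,K)$-$m$-injectivity hypothesis applies.
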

 \begin{lem}{\rm ([{\bf 6}, Lemma 1.3])}
 The following are equivalent for
two right ideals I and K of a ring R:\\
{\rm (1)} R is right {\rm(}I, K{\rm)}-FP-injective.\\
{\rm (2)} {\rm M}$_{n\times n}${\rm(}R{\rm)} is right
{\rm (M}$_{n\times n}{\rm(}I{\rm)}$, ${\rm M}_{n\times n}{\rm(}K{\rm)}${\rm)}-1-injective
for every n $\geq$ 1.
\end{lem}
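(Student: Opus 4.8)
I would prove this via the Morita equivalence between $R$ and $S:=\mathrm{M}_{n\times n}(R)$; the plan is to set up a dictionary translating the defining data of $(I,K)$-\emph{FP}-injectivity over $R$ into that of $1$-injectivity over the rings $S$. Regard $R_n$ as the right $R$-module of column vectors, so that $\mathrm{End}(R_n)\cong\mathrm{M}_{n\times n}(R)$ acts by left multiplication and, likewise, $\mathrm{End}(S_S)\cong S$ acts by left multiplication by matrices. If $x\in S$ and $e_{jj}$ ($1\le j\le n$) are the diagonal matrix units of $S$, then $U:=xS=\sum_{j=1}^{n}xe_{jj}S$, the matrix $xe_{jj}$ has the $j$-th column $c_j$ of $x$ in column $j$ and zeros elsewhere, and the columns of every element of $U$ lie in $N:=\sum_{j=1}^{n}c_jR\subseteq R_n$; moreover $U\subseteq\mathrm{M}_{n\times n}(I)$ exactly when $N\subseteq I_n$, and any submodule of $I_n$ generated by at most $n$ elements is the column span of such a single matrix. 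Finally, an $S$-homomorphism $\phi\colon U\to\mathrm{M}_{n\times n}(K)$ is determined by the vectors $\phi(xe_{jj})\in K_n$ (again supported in column $j$), and splitting these into coordinates gives $R$-homomorphisms $N\to K$; conversely an $R$-homomorphism $f\colon N\to K$ produces $\bar f\colon N\to K_n$, $v\mapsto(f(v),0,\dots,0)^{T}$, hence an $S$-homomorphism $U\to\mathrm{M}_{n\times n}(K)$.

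For $(1)\Rightarrow(2)$: fix $n$, take a principal right ideal $U=xS\subseteq\mathrm{M}_{n\times n}(I)$ of $S$ and an $S$-homomorphism $\phi\colon U\to\mathrm{M}_{n\times n}(K)$, and form $N\subseteq I_n$ as above. Reading off the coordinates of the nonzero columns of the matrices $\phi(xe_{jj})$ yields $R$-homomorphisms $f_1,\dots,f_n\colon N\to K$; the only point needing care is well-definedness, and it holds because any relation $\sum_{j}xe_{jj}y_j=0$ in $S$ forces, column by column, the matching $R$-linear relation among the $c_j$, which the $S$-linear map $\phi$ then transports. Since $R$ is right $(I,K)$-\emph{FP}-injective, each $f_i$ extends to some $g_i\colon R_n\to R$; assembling the $g_i$ gives a map $R_n\to R_n$, i.e.\ left multiplication by a matrix $C\in\mathrm{M}_{n\times n}(R)$, and a direct check on the generators $xe_{jj}$ shows that left multiplication by $C$ on $S$ is an extension of $\phi$ to $S\to S$.

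For $(2)\Rightarrow(1)$: let $m\ge1$, let $N$ be a finitely generated submodule of $I_m\subseteq R_m$ with generators $v_1,\dots,v_t$, and let $f\colon N\to K$. Choose $n\ge\max(m,t)$ and embed $R_m\hookrightarrow R_n$ as the first $m$ coordinates, so $N\subseteq I_n$. Let $x\in\mathrm{M}_{n\times n}(I)$ be the matrix whose first $t$ columns are $v_1,\dots,v_t$ (viewed in $R_n$) and whose remaining columns vanish, so $U:=xS$ is a principal right ideal contained in $\mathrm{M}_{n\times n}(I)$; using $\bar f$ as in the dictionary, form the $S$-homomorphism $\phi\colon U\to\mathrm{M}_{n\times n}(K)$, again checking it is well defined. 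By hypothesis $(2)$ applied to this $n$, $\phi$ extends to left multiplication by some $C\in\mathrm{M}_{n\times n}(R)$; then $g(v):=(Cv)_1$ defines an $R$-homomorphism $g\colon R_n\to R$ with $g(v_j)=f(v_j)$ for all $j$, and the restriction of $g$ to $R_m$ is the required extension of $f$.

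The main difficulty is bookkeeping rather than anything deep: one must keep apart three sizes — the rank $m$ of the ambient free module in the definition of $(I,K)$-\emph{FP}-injectivity, the number $t$ of generators of $N$, and the matrix size $n$ — and notice that $n\ge\max(m,t)$ leaves enough room (an $n\times n$ matrix has $n$ columns) to encode all the data inside $S=\mathrm{M}_{n\times n}(R)$; and one must verify in each direction that the induced homomorphism is well defined by matching relations in $S$, read column by column, with $R$-linear relations among generators of $N$. (This is [{\bf 6}, Lemma 1.3], and the argument above is the standard Morita-theoretic one; it is included because it lets one deduce the \emph{FP}-injective analogue of Theorem 13 by applying that theorem to the $J$-regular rings $\mathrm{M}_{n\times n}(R)$, cf.\ Proposition 7.)
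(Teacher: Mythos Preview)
The paper does not give its own proof of this lemma; it is quoted verbatim from [\textbf{6}, Lemma 1.3] and used as a black box in the proof of Theorem~15. So there is nothing in the paper to compare your argument against.

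That said, your proof is correct and is exactly the standard Morita-dictionary argument one expects here: a principal right ideal $xS$ of $S=\mathrm{M}_{n\times n}(R)$ corresponds, via its columns $c_1,\dots,c_n$, to the $n$-generated submodule $N=\sum c_jR\subseteq R_n$, and an $S$-linear map $\phi\colon xS\to\mathrm{M}_{n\times n}(K)$ unpacks (using $\phi(xe_{jj})=\phi(x)e_{jj}$) into the $n$ coordinate $R$-maps $f_i\colon N\to K$. Your well-definedness checks are the right ones --- a relation $xs=0$ in $S$ read columnwise gives exactly the $R$-linear relations among the $c_j$ needed --- and your device of taking $n\ge\max(m,t)$ in $(2)\Rightarrow(1)$ so that a single $n\times n$ matrix has room for both the ambient rank and the generators is the correct bookkeeping. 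The final restriction from $R_n$ back to $R_m$ is harmless since $N\subseteq R_m\subseteq R_n$ under the chosen embedding.
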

\begin{thm}
 If R is J-regular, then R is right
FP-injective if and only if R is right
{\rm(}J, R{\rm)}-FP-injective.
\end{thm}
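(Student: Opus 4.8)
The plan is to reduce the statement to a question about $1$-injectivity of matrix rings via Lemma 14, and then apply Theorem 13 to those matrix rings. First I would observe that being right \emph{FP}-injective is exactly being right $(R,R)$-\emph{FP}-injective, while being right $(J,R)$-\emph{FP}-injective is the instance $I=J$, $K=R$ of the general notion. By Lemma 14 (applied once with $I=K=R$ and once with $I=J$, $K=R$), the first property is equivalent to: $\mathrm{M}_{n\times n}(R)$ is right $(\mathrm{M}_{n\times n}(R),\mathrm{M}_{n\times n}(R))$-$1$-injective for every $n\geq 1$; and the second is equivalent to: $\mathrm{M}_{n\times n}(R)$ is right $(\mathrm{M}_{n\times n}(J),\mathrm{M}_{n\times n}(R))$-$1$-injective for every $n\geq 1$.

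Next I would record the two facts about $S:=\mathrm{M}_{n\times n}(R)$ that make the reduction work: $S$ is $J$-regular by Proposition 7, and $J(S)=J(\mathrm{M}_{n\times n}(R))=\mathrm{M}_{n\times n}(J)$, which was already used in the proof of Proposition 7. Thus the right ideal $\mathrm{M}_{n\times n}(J)$ of $S$ is precisely the Jacobson radical of the $J$-regular ring $S$.

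Now I would apply Theorem 13 to the $J$-regular ring $S$, taking the right ideal to be $K=S$ itself and $m=1$: this yields that $S$ is right $(S,S)$-$1$-injective if and only if $S$ is right $(J(S),S)$-$1$-injective. Substituting $S=\mathrm{M}_{n\times n}(R)$ and $J(S)=\mathrm{M}_{n\times n}(J)$, this says that $\mathrm{M}_{n\times n}(R)$ is right $(\mathrm{M}_{n\times n}(R),\mathrm{M}_{n\times n}(R))$-$1$-injective if and only if it is right $(\mathrm{M}_{n\times n}(J),\mathrm{M}_{n\times n}(R))$-$1$-injective. Since this equivalence holds for every $n\geq 1$, comparing with the two characterizations from Lemma 14 gives at once that $R$ is right \emph{FP}-injective if and only if $R$ is right $(J,R)$-\emph{FP}-injective.

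The only point requiring care — and the step I would be most attentive to — is the bookkeeping around Lemma 14 and Theorem 13: one must check that the universally quantified $n$ in Lemma 14 matches for both the $(R,R)$ and the $(J,R)$ versions so that Theorem 13 can be invoked uniformly for each $S=\mathrm{M}_{n\times n}(R)$, and that $\mathrm{M}_{n\times n}(J)$ really is $J(\mathrm{M}_{n\times n}(R))$ so that Theorem 13's hypothesis (the second argument being the Jacobson radical) is met. Once these identifications are in place the argument is a formal substitution, and no further computation is needed.
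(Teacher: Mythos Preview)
Your proposal is correct and follows essentially the same route as the paper: translate the question to $1$-injectivity of matrix rings via Lemma~14, use Proposition~7 together with $J(\mathrm{M}_{n\times n}(R))=\mathrm{M}_{n\times n}(J)$ to see that each $\mathrm{M}_{n\times n}(R)$ is $J$-regular with the right Jacobson radical, and then invoke the main result for each $n$. The only cosmetic differences are that the paper cites Theorem~11 (with $n=1$) rather than Theorem~13 at the key step, and concludes via \cite[Theorem~5.41]{NY03} rather than by a second application of Lemma~14 with $I=K=R$; both pairs of statements say the same thing here.
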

\begin{proof}
If $R$ is right \emph{FP}-injective, it is clear that $R$ is right {\rm(}$J,R${\rm)}-\emph{FP}-injective. Conversely, assume that $R$ is right {\rm(}$J,R${\rm)}-\emph{FP}-injective. By Lemma 14,  M$_{n\times n}(R)$ is right
({\rm M}$_{n\times n}{\rm(}J{\rm)}$, ${\rm M}_{n\times n}{\rm(}R{\rm)}${\rm)}-1-injective
for every $n \geq$ 1. Since $R$ is $J$-regular,  by Proposition 7, M$_{n\times n}(R)$ is $J$-regular. Again since
$J$(M$_{n\times n}(R)$)=M$_{n\times n}(J)$, Theorem 11 implies that M$_{n\times n}(R)$ is right 1-injective for every $n \geq$ 1.
Thus, by [{\bf 4}, Theorem 5.41], $R$ is right \emph{FP}-injective.
\end{proof}
By the above theorems, we obtain the following corollaries.
\begin{cor}Let R be a semilocal ring.\\
{\rm (1)} If I is a right ideal of R and $m\geq 1$, then R is right (R, I)-m-injective if and only
if R is right (J, I)-m-injective.\\
{\rm (2)} R is right F-injective if and only if $R$ is right (J, R)-n-injective for every $n\geq 1$.\\
{\rm (3)} R is right FP-injective if and only if R is right (J, R)-FP-injective.
\end{cor}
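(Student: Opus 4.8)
The plan is to reduce everything to results already established in the paper. First I would observe that every semilocal ring is $J$-regular: if $R/J$ is semisimple then $R/J$ is von Neumann regular, which is exactly the implication ``semilocal $\Rightarrow$ $J$-regular'' recorded in Remark~2. This is the only new observation required, and it is immediate; once it is in hand, the three assertions are specializations of earlier theorems and the remaining work is purely a matter of matching up notation.

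For part (1), I would apply Theorem~13 to the $J$-regular ring $R$, taking the given right ideal $I$ in the place of the right ideal denoted $K$ there; this yields directly that $R$ is right $(R,I)$-$m$-injective if and only if $R$ is right $(J,I)$-$m$-injective. For part (3), I would simply invoke Theorem~15, which applies since $R$ is $J$-regular.

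For part (2), I would use that, by definition, $R$ is right F-injective precisely when $R$ is right $n$-injective for every $n\geq 1$, together with the identification of ``right $n$-injective'' with ``right $(R,R)$-$n$-injective''. Fixing $n$ and applying Theorem~13 with $K=R$ and $m=n$ gives the equivalence of ``right $(R,R)$-$n$-injective'' and ``right $(J,R)$-$n$-injective''; quantifying over all $n$ then yields the stated equivalence. One could alternatively route part (2) through Corollary~12, using that every small right ideal of any ring lies inside $J$, but the path through Theorem~13 is shorter and avoids the small-ideal reduction.

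I do not expect any genuine obstacle here: the substance is carried entirely by Theorems~11, 13, and 15, and the corollary needs only the trivial observation that a semilocal ring meets their common hypothesis of $J$-regularity, together with care in lining up the ideal $I$ with the symbol $K$ of Theorem~13 and in handling part (2) one value of $n$ at a time before quantifying.
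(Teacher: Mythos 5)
Your proposal is correct and matches the paper's (implicit) argument exactly: the paper derives Corollary 16 from Theorems 13 and 15 precisely via the observation that semilocal rings are $J$-regular, handling part (2) by identifying right $n$-injectivity with right $(R,R)$-$n$-injectivity and applying Theorem 13 for each $n$. No discrepancies to report.
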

\begin{rem}{\rm Recall that a ring $R$ is  \emph{right small injective} if every homomorphism from a small right ideal of $R$ to $R_{R}$ can be extended to one from $R_{R}$ to $R_{R}$. It was proved in [{\bf 5}, Theorem 3.16 (1)] that if $R$ is semilocal, then $R$ is right self-injective if and only if $R$ is right small injective. But the results in the above corollary weren't obtained in [{\bf 5}].}
\end{rem}
\begin{cor}{\rm ([{\bf 5}, Theorem 3.16 (3), (4)])} Let R be a semiregular ring and $m\geq 1$.\\
{\rm (1)} If I is a right ideal of R, then R is right (J, I)-m-injective if and only
if R is right (R, I)-m-injective.\\
{\rm (2)} R is right (J, R)-FP-injective if and only if R is right FP-injective.

\end{cor}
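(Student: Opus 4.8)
The plan is to reduce the global \emph{FP}-injectivity statement to the $1$-injectivity results already established for matrix rings, using the characterization of \emph{FP}-injectivity via matrix rings. The forward implication is trivial, since $(J,R)$-\emph{FP}-injectivity is just a restriction of the defining condition for \emph{FP}-injectivity. So the real content is the converse: assuming $R$ is right $(J,R)$-\emph{FP}-injective, deduce that $R$ is right \emph{FP}-injective.

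First I would invoke Lemma~14 with $I = J$ and $K = R$: right $(J,R)$-\emph{FP}-injectivity of $R$ is equivalent to $\mathrm{M}_{n\times n}(R)$ being right $(\mathrm{M}_{n\times n}(J), \mathrm{M}_{n\times n}(R))$-$1$-injective for every $n \geq 1$. Next I would identify $\mathrm{M}_{n\times n}(J)$ with the Jacobson radical of $\mathrm{M}_{n\times n}(R)$, which is standard (and already used in the proof of Proposition~7), so that the above condition reads: $\mathrm{M}_{n\times n}(R)$ is right $(J(\mathrm{M}_{n\times n}(R)), \mathrm{M}_{n\times n}(R))$-$1$-injective. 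By Proposition~7, $\mathrm{M}_{n\times n}(R)$ is itself $J$-regular, so I can apply Theorem~11 (with $n=1$, i.e.\ the version saying that a $J$-regular ring is right $1$-injective as soon as homomorphisms from $1$-generated small right ideals extend — equivalently Theorem~13 with $K = R$, $m = 1$) to conclude that $\mathrm{M}_{n\times n}(R)$ is right $1$-injective for every $n \geq 1$. Finally, the classical fact that $R$ is right \emph{FP}-injective precisely when every matrix ring $\mathrm{M}_{n\times n}(R)$ is right $1$-injective (cited here as [\textbf{4}, Theorem 5.41], and which is exactly Lemma~14 in the special case $I = K = R$) finishes the argument.

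The only step requiring genuine care is the application of Theorem~11 (equivalently Theorem~13) to $\mathrm{M}_{n\times n}(R)$: one must check that the hypothesis "$\mathrm{M}_{n\times n}(R)$ is right $(J(\mathrm{M}_{n\times n}(R)), \mathrm{M}_{n\times n}(R))$-$1$-injective" is precisely the condition "every homomorphism from a $1$-generated small right ideal extends", which uses that every cyclic right ideal contained in the Jacobson radical is small, and conversely that small cyclic right ideals of a $J$-regular ring sit inside (a translate handled by the weak supplement argument of) the radical — but this is already packaged inside Theorem~13, so no new work is needed. I would therefore expect the proof to be short: chain together Lemma~14, Proposition~7, the identification $J(\mathrm{M}_{n\times n}(R)) = \mathrm{M}_{n\times n}(J)$, Theorem~11/13, and [\textbf{4}, Theorem 5.41].
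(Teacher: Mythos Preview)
Your proposal addresses only part~(2) of the corollary and says nothing about part~(1). Part~(1) asserts that for a semiregular ring $R$, right $(J,I)$-$m$-injectivity is equivalent to right $(R,I)$-$m$-injectivity for an arbitrary right ideal $I$; this is a separate claim not covered by the \emph{FP}-injective argument you give. In the paper it is immediate from Theorem~13 once one observes (Remark~2) that every semiregular ring is $J$-regular, so the omission is easy to repair, but as written your proof is incomplete.

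For part~(2), your argument is correct and is exactly the paper's proof of Theorem~15: invoke Lemma~14, use $J(\mathrm{M}_{n\times n}(R))=\mathrm{M}_{n\times n}(J)$ and Proposition~7 to see that each matrix ring is $J$-regular, apply Theorem~11 (or Theorem~13 with $K=R$, $m=1$) to get right $1$-injectivity of each $\mathrm{M}_{n\times n}(R)$, and conclude via [\textbf{4}, Theorem~5.41]. In the paper Corollary~18 is not given a separate proof but is simply recorded as a specialization of Theorems~13 and~15 to the semiregular case, so what you have written is an unpacking of that same route rather than a different approach. Note in particular that your argument nowhere uses the lifting of idempotents, so you have in effect reproved Theorem~15 for $J$-regular rings; the semiregular hypothesis is stronger than what you actually need.
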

\begin{cor}{\rm ([{\bf 6}, Lemma 2.3])} Let R be a semiregular ring.\\
{\rm (1)} If R is right (J, $S_{r}$)-1-injective, then R is right (R, $S_{r}$)-1-injective.\\
{\rm (2)} If R is right (J, R)-1-injective, then  R is right 1-injective.
\end{cor}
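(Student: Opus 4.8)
The plan is to obtain both parts as direct specializations of Theorem 13, which over any $J$-regular ring establishes the equivalence of right $(R,K)$-$m$-injectivity and right $(J,K)$-$m$-injectivity. The first thing I would record is that the hypothesis of this corollary is more than enough to invoke that theorem: by the implications collected in Remark 2, a semiregular ring is in particular $J$-regular. Thus $R$ is $J$-regular, Theorem 13 is at our disposal, and it is the sole substantive ingredient; the corollary is merely one direction of each biconditional supplied by that theorem.

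For part (1), I would apply Theorem 13 with $K = S_r$ and $m = 1$, using that the right socle $S_r$ is a right ideal of $R$. The theorem then asserts that $R$ is right $(R, S_r)$-1-injective if and only if $R$ is right $(J, S_r)$-1-injective; reading off the implication from the $(J, S_r)$ side to the $(R, S_r)$ side gives exactly the claim.

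For part (2), I would apply Theorem 13 with $K = R$ and $m = 1$, obtaining that $R$ is right $(R, R)$-1-injective if and only if $R$ is right $(J, R)$-1-injective. The one point worth spelling out is that, by the definition recalled in the introduction, right $(R, R)$-1-injectivity means precisely that every homomorphism from a $1$-generated (principal) right ideal of $R$ to $R_R$ extends to $R_R \to R_R$, that is, $R$ is right $1$-injective. With this identification, the relevant half of the biconditional says that right $(J, R)$-1-injectivity implies right $1$-injectivity, which is the desired conclusion.

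I do not foresee any genuine obstacle: the entire weight of the argument already sits in Theorem 13, and what remains is bookkeeping—confirming that semiregular rings are $J$-regular so the theorem applies, that $S_r$ and $R$ are legitimate right ideals to take as $K$, and that the $(R,R)$-1-injective condition coincides with ordinary right $1$-injectivity.
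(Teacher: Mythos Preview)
Your proposal is correct and matches the paper's approach exactly: the paper gives no separate proof for this corollary, merely noting that it follows ``by the above theorems,'' and your derivation---observing that semiregular implies $J$-regular via Remark~2 and then specializing Theorem~13 with $K=S_r$, $m=1$ and $K=R$, $m=1$---is precisely what is intended.
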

\begin{cor}{\rm ([{\bf 6},  Theorem 2.11 (1), (2)])} Let R be a semiperfect ring with an essential right socle and $m\geq 1$.\\
{\rm (1)} If R is right (J, $S_{r}$)-m+1-injective, then R is right (R, $S_{r}$)-m-injective.\\
{\rm (2)} If R is right (J, R)-m+1-injective, then R is right m-injective.

\end{cor}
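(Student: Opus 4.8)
The plan is to read off this corollary directly from Theorem 13, using only the elementary implications recorded in Remark 2. First I would note that a semiperfect ring is semilocal and that a semilocal ring is $J$-regular; hence $R$ satisfies the hypothesis of Theorem 13, and --- tellingly --- the assumption that the right socle of $R$ is essential is never invoked. (One could equally quote Corollary 16, since a semiperfect ring is semilocal.)

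For part (1) I would apply Theorem 13 with the right ideal $K = S_{r}$, obtaining that $R$ is right $(R, S_{r})$-$m$-injective if and only if $R$ is right $(J, S_{r})$-$m$-injective. Since an $m$-generated right ideal is in particular $(m+1)$-generated, right $(J, S_{r})$-$(m+1)$-injectivity implies right $(J, S_{r})$-$m$-injectivity, and the equivalence then gives right $(R, S_{r})$-$m$-injectivity, as required. In fact the same argument shows that right $(J, S_{r})$-$m$-injectivity already suffices, so the passage from $m+1$ down to $m$ is the improvement over [{\bf 6}, Theorem 2.11].

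For part (2) I would apply Theorem 13 with $K = R$ and recall that ``right $(R,R)$-$m$-injective'' is by definition ``right $m$-injective.'' Exactly as in part (1), right $(J, R)$-$(m+1)$-injectivity yields right $(J, R)$-$m$-injectivity, which by Theorem 13 is equivalent to right $(R, R)$-$m$-injectivity, i.e. $R$ is right $m$-injective.

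There is essentially no obstacle here: the whole content lies in Theorem 13, and the only points needing care are the reduction ``semiperfect $\Rightarrow$ $J$-regular'' (so that Theorem 13 applies) and the trivial monotonicity in the number of generators that lets one pass from $m+1$ to $m$. The corollary therefore amounts to the observation that [{\bf 6}, Theorem 2.11 (1), (2)] is the special case $K = S_{r}$, $K = R$ of Theorem 13, shorn of a redundant hypothesis.
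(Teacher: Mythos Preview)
Your proposal is correct and matches the paper's approach: the paper states this corollary without a separate proof, grouping it under ``By the above theorems, we obtain the following corollaries,'' so the intended argument is precisely the deduction from Theorem 13 (via the implication semiperfect $\Rightarrow$ $J$-regular of Remark 2) together with the trivial monotonicity in the number of generators that you spell out. Your observation that the essential-right-socle hypothesis is never used, and that already $(J,K)$-$m$-injectivity suffices, is exactly the improvement the paper is advertising over [{\bf 6}].
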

\bigskip
\begin{center}
{\rm  Acknowledgements}
 \end{center}
\indent The article was written during the author's visiting the center of ring theory and its applications in Department of
Mathematics, Ohio University. He would like to thank the center for the hospitality. The research is supported by China Scholarship Council and Southeast University Foundation (No.4007011034 and  No.9207012402). The author is also partially supported  by the National Natural Science Foundation of China (No.10971024).

\begin{flushleft}

LIANG SHEN\\
DEPARTMENT OF MATHEMATICS\\
SOUTHEAST UNIVERSITY\\
NANJING, 210096\\
P.R.CHINA\\
E-mail: lshen@seu.edu.cn
\end{flushleft}

\end{document}